\newtheorem{theorem}{Theorem}
\newtheorem{definition}[theorem]{Definition}
\newtheorem{proposition}[theorem]{Proposition}
\newcommand{\NN}{\mathbb{N}}
\newcommand{\PP}{\mathbb{P}}
\newcommand{\RR}{\mathbb{R}}
\newcommand{\Real}{\mathrm{Re}}
\begin{document}
\title{A Short Note on the Bruinier-Kohnen Sign Equidistribution Conjecture and Hal\'{a}sz' Theorem}
\author{Ilker Inam\footnote{Bilecik Seyh Edebali University, Department of Mathematics, Faculty of Art and Sciences, 11020, Bilecik, Turkey, ilker.inam@gmail.com, ilker.inam@bilecik.edu.tr} and Gabor Wiese  \footnote{Universit\'e du Luxembourg,
Facult\'e des Sciences, de la Technologie et de la Communication,
6, rue Richard Coudenhove-Kalergi,
L-1359 Luxembourg, Luxembourg, gabor.wiese@uni.lu}}

\maketitle

\begin{abstract}
In this note, we improve earlier results towards the Bruinier-Kohnen sign equidistribution conjecture for half-integral weight modular eigenforms in terms of natural density by using a consequence of Hal\'{a}sz' Theorem. Moreover, applying a result of Serre we remove all unproved assumptions.\\

\textbf{Mathematics Subject Classification (2010):} 11F37 (Forms of half-integer weight; nonholomorphic modular forms); 11F30 (Fourier coefficients of automorphic forms).\\

\textbf{Keywords:} Half-integral weight modular forms, Shimura lift, Sato-Tate equidistribution, Fourier coefficients of modular forms, Hecke eigenform.
\end{abstract}

By using the celebrated Sato-Tate theorem for integral weight modular eigenforms and the Shi\-mura lift, in \cite{IlGa} and in \cite{AIW} (together with Sara Arias-de-Reyna), we prove results related to the Bruinier-Kohnen sign equidistribution conjecture for modular eigenforms of half integral weight.
In this note we improve one of our main results to a formulation in terms of natural density. Moreover, a theorem of Serre's allows us to remove all unproved assumptions.

The first improvement is due to the following application of Hal\'{a}sz' Theorem that one of us learned from Kaisa Matom{\"a}ki.

\begin{theorem}\label{Halasz}
Let $g : \NN \to \{-1,0,1\}$ be a multiplicative function.
If $\sum_{p, g(p)=0}^{} \frac{1}{p}$ converges and $\sum_{p, g(p)=-1}^{} \frac{1}{p}$ diverges then 
\begin{center}
$\lim_{x \to \infty} \frac{|\{ n \leq x : g(n) \gtrless 0    \}|}{\mid \{n \leq x : g(n) \neq 0\} \mid } = \frac{1}{2}$.
\end{center}
\end{theorem}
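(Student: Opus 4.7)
The plan is to separate the numerator from the denominator of the ratio. With $N_\pm(x) := |\{n \leq x : g(n) = \pm 1\}|$ and $D(x) := N_+(x) + N_-(x) = |\{n \leq x : g(n) \neq 0\}|$, one has
\[
\frac{N_\pm(x)}{D(x)} - \frac{1}{2} \;=\; \pm\,\frac{1}{2}\cdot\frac{N_+(x) - N_-(x)}{D(x)} \;=\; \pm\,\frac{1}{2}\cdot\frac{\sum_{n \leq x} g(n)}{D(x)},
\]
so the theorem will follow from two estimates: (i) $\sum_{n \leq x} g(n) = o(x)$, and (ii) $D(x) \gg x$.

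Step (i) is the \emph{consequence of Hal\'{a}sz' Theorem} hinted at in the text: in its Wirsing formulation, for any multiplicative $f : \NN \to [-1,1]$ such that $\sum_p (1 - f(p))/p$ diverges, the mean $\frac{1}{x}\sum_{n \leq x} f(n)$ tends to $0$. Applied to $g$, the relevant series splits as
\[
\sum_p \frac{1 - g(p)}{p} \;=\; \sum_{g(p) = 0} \frac{1}{p} \;+\; 2\sum_{g(p) = -1} \frac{1}{p},
\]
and the second summand diverges by hypothesis. Note that this step uses only the divergence assumption.

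Step (ii) uses the convergence assumption, via an elementary sieve. Set $P_0 := \{p : g(p) = 0\}$. Every squarefree $n$ whose prime divisors all lie outside $P_0$ satisfies $g(n) = \prod_{p \mid n} g(p) \in \{-1,+1\}$ by multiplicativity, hence contributes to $D(x)$. Given $\varepsilon > 0$, pick $y$ with $\sum_{p \in P_0,\, p > y} 1/p < \varepsilon$; an elementary inclusion-exclusion shows that the number of squarefree $n \leq x$ coprime to every prime in $P_0 \cap [2,y]$ is asymptotic to a positive constant (depending only on $y$) times $x$, while the count of $n \leq x$ divisible by some $p \in P_0$ with $p > y$ is at most $\varepsilon x$. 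For $\varepsilon$ small and $x$ large, the difference is $\gg x$.

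The substantive input is (i); I would simply cite Hal\'{a}sz/Wirsing rather than reproduce the proof. The remaining bookkeeping — checking that the hypothesis on the zero-primes activates Hal\'{a}sz's criterion, and that it simultaneously guarantees a positive density of nonzero values of $g$ — is routine. The one subtlety worth flagging is the restriction to squarefree $n$ in step (ii): since the hypothesis controls $g$ only at primes and not at prime powers, the values $g(p^a)$ for $a \geq 2$ could in principle be $0$ frequently, and squarefree $n$ are precisely those on which multiplicativity alone forces $g(n) \neq 0$ from $g(p) \neq 0$.
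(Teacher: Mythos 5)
Your proposal is correct, and its skeleton coincides with the paper's: write the deviation of the ratio from $1/2$ as $\pm\tfrac{1}{2}\sum_{n\le x}g(n)/D(x)$, kill the numerator via Hal\'{a}sz--Wirsing, and bound the denominator below by a positive multiple of~$x$. Your step~(i) is exactly the paper's main step: the paper uses the quantitative form from Lemma~2.2 of~\cite{MR}, namely $\sum_{n\le x}g(n)\le Cx\exp\bigl(-\tfrac14\sum_{p\le x}\tfrac{1-g(p)}{p}\bigr)$, and your reduction of the divergence of $\sum_p(1-g(p))/p$ to that of $\sum_{g(p)=-1}1/p$ is the same computation. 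Where you genuinely diverge is step~(ii): the paper applies the mean-value machinery a second time, to the multiplicative function $|g|$, concluding from the convergence of $\sum_{g(p)=0}1/p$ that $\tfrac{1}{x}\sum_{n\le x}|g(n)|$ tends to a positive limit, whereas you run an elementary inclusion--exclusion over squarefree integers avoiding $P_0=\{p:g(p)=0\}$. Your route is more self-contained and arguably more transparent, since the paper's second invocation of an \emph{upper-bound} lemma to produce a \emph{positive lower} limit is the least explicit point of its proof. The one detail you should make explicit is the order of quantifiers in your sieve: your constant $c_y$ depends on $y$, which depends on $\varepsilon$, so you need the uniform lower bound $c_y\ge \tfrac{6}{\pi^2}\prod_{p\in P_0}(1-1/p)>0$, which holds precisely because $\sum_{p\in P_0}1/p$ converges; with that observed, choosing $\varepsilon$ to be half this infinite product closes the argument. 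Your closing remark about restricting to squarefree~$n$ is well taken: the hypotheses control $g$ only at primes, and since only a lower bound on $D(x)$ is needed, the restriction is harmless.
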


\begin{proof}
By Lemma~2.2 of~\cite{MR}, which is a consequence of Hal\'{a}sz' Theorem (see \cite{Hal} for details), there exists an absolute positive constant $C$ such that
\begin{center}
$\sum_{n \leq x} g(n) \leq C \cdot x \exp\left(-\frac{1}{4} \sum_{p \leq x} \frac{1-g(p)}{p} \right)$
\end{center}
for all $x \geq 2$.
By assumption, we have $1-g(p) \geq 0$ for all~$p$ and $1-g(p) =2>1$ for any $p$ with $g(p)=-1$.
We conclude that for $x \to \infty$, $\exp\left(-\frac{1}{4} \sum_{p \leq x} \frac{1-g(p)}{p} \right)$ tends to $0$. Hence for the average value of $g$, we have $\lim_{x \to \infty} \frac{\sum_{n \leq x} g(n)}{x}=0$ and therefore 
\begin{center}
$\sum_{n \leq x} g(n) = \mid \{ n \leq x | g(n) >0 \}\mid-\mid \{ n \leq x | g(n) <0 \}\mid=o(x)$.
\end{center}
Since $\sum_{p, g(p)=0} \frac{1}{p}$ converges by assumption, we conclude again by Lemma~2.2 of~\cite{MR} that for $x \to \infty$, 
\begin{center}
$\frac{\sum_{n \leq x} \mid g(n) \mid}{x}
=\frac{\mid \{ n \leq x | g(n) >0 \}\mid+\mid \{ n \leq x | g(n) <0 \}\mid}{x}$  
\end{center}
tends to a positive limit, hence the assertion follows immediately.
\end{proof}

In order to state and prove the results towards the Bruinier-Kohnen conjecture, we introduce some notation to be used throughout the note. Let $k \ge 2$ and $4|N$ be integers and $\chi$ be a quadratic Dirichlet character modulo~$N$. We denote the space of cusp forms of weight $k+1/2$ for the group $\Gamma_1(N)$
with character~$\chi$ by $S_{k+1/2}(N, \chi)$ in the sense of Shimura,
as in the main theorem in \cite{Shi} on p.~458. For Hecke operators $T_{p^2}$ for primes $p \nmid N$, let $f=\sum_{n \ge 1}^{}a(n)q^n \in S_{k+1/2}(N, \chi)$ be a non-zero cuspidal Hecke eigenform with real coefficients. For a fixed squarefree $t$ such that $a(t) \neq 0$, denote by $F_t$ the Shimura lift of~$f$ with respect to~$t$. It is a cuspidal Hecke eigenform of weight $2k$ for the group $\Gamma_0(N/2)$ with trivial character. By normalising~$f$ we can and do assume $a(t)=1$, in which case $F_t$ is normalised.

As in our previous treatments, the following theorem, of which we only state a weak version, is in the core of our approach. Its proof is based on the Sato-Tate theorem, see~\cite{ST}.

\begin{theorem}\cite{IlGa},\cite{AIW} \label{thm:prime}
Assume the set-up above and define the set of primes
$$\PP_{> 0}:=\{p : a(tp^2)  > 0 \}$$
and similarly $\PP_{< 0}$ and $\PP_{= 0}$ (depending on $f$ and~$t$).
Then the sets $\PP_{> 0}$ and $\PP_{< 0}$ have positive natural densities and the set $\PP_{= 0}$ has natural density $0$.
\end{theorem}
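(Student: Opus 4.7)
The plan is to translate the sign of $a(tp^2)$ into a condition on the normalised Hecke eigenvalues of the integral weight eigenform $F_t$, and then invoke the Sato-Tate theorem. First I would recall Shimura's original relation: writing $F_t = \sum_{n \geq 1} A(n) q^n$, for every prime $p \nmid N$ one has
\[
a(tp^2) \;=\; A(p) - \chi'(p)\, p^{k-1},
\]
where $\chi'$ is a quadratic Dirichlet character determined by $\chi$, $t$ and $N$, and where I have used the normalisation $a(t) = 1$. Since $F_t$ is a cuspidal Hecke eigenform of weight $2k$, Deligne's bound gives $|A(p)| \leq 2 p^{k-1/2}$, so it is natural to set $b_p := A(p) / p^{k-1/2} \in [-2, 2]$. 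Dividing through by $p^{k-1/2}$ shows that the sign of $a(tp^2)$ coincides with the sign of $b_p - \chi'(p) p^{-1/2}$, and the correction term tends to $0$ as $p \to \infty$.

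The next step is to apply the Sato-Tate theorem to $F_t$, giving the equidistribution of $(b_p)_{p \nmid N}$ with respect to the Sato-Tate measure $\mu_{\mathrm{ST}} = \frac{1}{2\pi}\sqrt{4 - x^2}\, dx$ on $[-2, 2]$. Since $\mu_{\mathrm{ST}}$ is continuous and atomless, both $\mu_{\mathrm{ST}}((\varepsilon, 2])$ and $\mu_{\mathrm{ST}}([-2, -\varepsilon))$ are strictly positive for every $\varepsilon > 0$, while $\mu_{\mathrm{ST}}([-\varepsilon, \varepsilon]) \to 0$ as $\varepsilon \to 0$. For $p$ sufficiently large that $p^{-1/2} < \varepsilon/2$, the inequality $b_p > \varepsilon$ forces $a(tp^2) > 0$ and $b_p < -\varepsilon$ forces $a(tp^2) < 0$, which already yields positive natural densities for $\PP_{>0}$ and $\PP_{<0}$. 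For $\PP_{=0}$, the equation $a(tp^2) = 0$ forces $|b_p| = p^{-1/2}$, and combining this with Sato-Tate upper-bounds the natural density of $\PP_{=0}$ by $\mu_{\mathrm{ST}}([-\varepsilon, \varepsilon])$ for arbitrary $\varepsilon > 0$, hence by zero.

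The main obstacle is ensuring that Sato-Tate genuinely applies to $F_t$. In the non-CM case this is the deep theorem of Barnet-Lamb, Geraghty, Harris and Taylor, invoked as a black box through the reference \cite{ST}. If $F_t$ has complex multiplication then $A(p) = 0$ for the inert primes in the CM field (on which the sign of $a(tp^2)$ is instead governed by $-\chi'(p) p^{k-1}$, and is in particular nonzero) while the $b_p$ on the split primes are equidistributed with respect to a different, still atomless, measure on $[-2, 2]$, so the conclusion persists after a short case distinction. A secondary technicality is pinning down the precise character $\chi'$ from Shimura's construction and discarding the finitely many primes dividing $N$ in the above Hecke relation, but neither disturbs the structure of the argument.
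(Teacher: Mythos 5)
Your proposal is correct and follows essentially the same route as the proof the paper relies on (the theorem is quoted from \cite{IlGa} and \cite{AIW}, where it is proved exactly this way): pass to the Shimura lift via $A(p)=a(tp^2)+\epsilon(p)p^{k-1}$, normalise by $p^{k-1/2}$, and apply the Sato--Tate theorem of \cite{ST}, with a separate treatment of the CM case. The only point worth making fully explicit is that the two-sided sandwich $\{p : b_p>\varepsilon\}\subseteq\PP_{>0}\subseteq\{p : b_p>-\varepsilon\}$ (up to finitely many primes), with $\varepsilon\to 0$, is what yields the \emph{existence} of the natural density and not merely a positive lower density.
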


Due to its importance in the sequel, here we recall the following notion (Definition~2.2.1 of~\cite{AIW}).
\begin{definition}
Let $S$ be a set of primes. It is called {\em weakly regular} if there is $a \in \RR$ (called the {\em Dirichlet density} of~$S$) and a function $g(z)$ which is holomorphic on $\{ \Real(z) > 1   \}$ and continuous (in particular, finite) on $\{ \Real(z) \geq 1   \}$ such that
\begin{center}
$\sum_{p \in S} \frac{1}{p^z} = a \log \left(\frac{1}{z-1}\right)+g(z)$.
\end{center}
\end{definition}

The second improvement of this paper is the observation that a result of Serre's allows us to prove
directly that the set $\PP_{=0}$ is always weakly regular. This approach avoids the use of
Sato-Tate equidistribution and consequently does not depend on any unproved error terms for it.
It only applies to $\PP_{=0}$ and hence does not seem to give us the weak regularity of the other sets
$\PP_{>0}$ and $\PP_{<0}$.

\begin{proposition}\label{prop:weak-regularity}
Assume the setup above. Then the set $\PP_{=0}$ is weakly regular of density zero.
\end{proposition}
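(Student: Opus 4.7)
My plan is to use the Shimura lift to convert the condition $a(tp^2) = 0$ into a condition on the $p$-th Hecke eigenvalue $A(p)$ of the integer-weight eigenform $F_t$, and then to invoke Serre's Chebotarev machinery for integer-weight cuspidal eigenforms. The Shimura recursion gives, for every prime $p \nmid N$, the identity
\[
A(p) \;=\; a(tp^2) \;+\; \psi(p)\,p^{k-1},
\]
for a fixed quadratic Dirichlet character $\psi$ (determined by $t$, $k$ and $\chi$). Thus, up to the finite set of primes dividing~$N$,
\[
\PP_{=0} \;=\; \bigl\{ p : A(p) = \psi(p)\,p^{k-1} \bigr\}.
\]

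\textbf{Applying Serre.} I would partition $\PP_{=0}$ according to $\epsilon := \psi(p) \in \{-1,+1\}$ (the value $\psi(p) = 0$ contributing only finitely many primes) and, for each $\epsilon$, consider the Chebotarev condition
\[
\mathrm{tr}\,\rho_\ell(\mathrm{Frob}_p) \;\equiv\; \epsilon\,p^{k-1} \pmod{\ell}
\]
on the $\ell$-adic Galois representation $\rho_\ell$ attached to $F_t$. This cuts out a conjugation-stable subset of the image of $\rho_\ell$ whose Chebotarev density $\delta_\ell$ tends to $0$ as $\ell \to \infty$ (by the largeness of the image in the non-CM case, and by the explicit description of the image in the CM case, where the target $\epsilon p^{k-1}$ is never zero and hence avoids the inert primes carrying the $1/2$-density mass). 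Serre's theorem (from \emph{Quelques applications du th\'eor\`eme de densit\'e de Chebotarev}, Publ.\ Math.\ IH\'ES 1981) packages this into a power-saving counting bound
\[
\#\bigl\{p \leq x : A(p) = \epsilon\,p^{k-1}\bigr\} \;\ll\; \frac{x}{(\log x)^{1+\delta}}
\]
for some $\delta > 0$. Summation by parts then yields the convergence of $\sum_{p \in \PP_{=0}} 1/p$.

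\textbf{From convergence to weak regularity.} Once $\sum_{p \in \PP_{=0}} 1/p$ converges, the Dirichlet series $\sum_{p \in \PP_{=0}} p^{-z}$ converges absolutely and uniformly on $\{\Real(z) \geq 1\}$ by the Weierstrass $M$-test, and therefore defines a function that is holomorphic on $\{\Real(z) > 1\}$ and continuous up to the boundary line $\{\Real(z) = 1\}$. Setting $a := 0$ and $g(z) := \sum_{p \in \PP_{=0}} p^{-z}$, this is exactly the defining decomposition of weak regularity with Dirichlet density zero, which is the conclusion of the proposition.

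\textbf{Main obstacle.} The technical heart of the argument is the power-saving bound in Step~2: Serre's classical statement concerns $\#\{p \leq x : A(p) = c\}$ for a \emph{fixed} algebraic value $c$, whereas here the target $\epsilon p^{k-1}$ depends on~$p$. One route is to renormalise by a cyclotomic twist, studying $\rho_\ell \otimes \chi_{\mathrm{cyc}}^{-(k-1)}$ and asking for trace equal to the fixed value $\epsilon$; another is to appeal directly to the more general Serre-type theorem for conjugacy-class conditions in a compatible system of $\ell$-adic representations, where the $p$-dependent twist is absorbed into the representation. Once this Serre-type input is in hand, the passage to weak regularity in Step~3 is purely formal.
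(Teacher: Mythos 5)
Your proposal follows essentially the same route as the paper: the Shimura-lift relation $A(p)=a(tp^2)+\epsilon(p)p^{k-1}$ reduces the problem to counting primes with $A(p)=\pm p^{k-1}$, Serre's Chebotarev-based results give a power-saving bound, and weak regularity of density zero then follows formally (the paper cites Corollary~2.2.4 of~\cite{AIW} where you give the equivalent direct argument via partial summation and uniform convergence on $\{\Real(z)\geq 1\}$). The ``main obstacle'' you flag is in fact already resolved by the precise reference the paper uses --- Corollaire~1 of Th\'eor\`eme~15 in~\cite{Serre}, invoked with $h(T)=T^{k-1}$ and $h(T)=-T^{k-1}$, is stated exactly for the $p$-dependent condition $A(p)=h(p)$, so no cyclotomic renormalisation is needed; the only other difference is that the paper sidesteps the CM case entirely by citing Theorem~4.1.1(c) of~\cite{AIW} instead of arguing through the Galois image.
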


\begin{proof}
Let $F = F_t = \sum_{n=1}^\infty A(n) q^n$ be the Shimura lift of~$f$ with respect to~$t$.
If $F$ has CM, then the result has been proved in Theorem~4.1.1(c) of~\cite{AIW}.
So let us assume that $F$ has no CM. Due to the assumption $a(t)=1$ we have the formula
$$ A(p)=a(tp^2)+\epsilon(p)p^{k-1}$$
for all primes~$p$, where $\epsilon$ is an at most quadratic
(due to the assumption that all coefficients are real) Dirichlet character of modulus $2tN^2$
(see e.g.\ equation~(4.1) of~\cite{AIW}).
Consequently, we have the inclusion
\begin{multline*}
 \{p < x : p \nmid 2tN, p \in \PP_{=0}\}
=\{p < x : p \nmid 2tN, a(tp^2)=0\}\\
\subseteq
 \{p < x : p \nmid 2tN, A(p)=p^{k-1}\} \cup
 \{p < x : p \nmid 2tN, A(p)=-p^{k-1}\}.
\end{multline*}
By Corollaire~1 of Th\'eor\`eme~15 in~\cite{Serre} (with $h(T)=T^{k-1}$ and $h(T)=-T^{k-1}$),
it follows that
$$ \#\{p < x : p \in \PP_{=0}\} = o(\frac{x}{\log(x)^{9/8}}).$$
Consequently, Corollary~2.2.4 of~\cite{AIW} implies that $\PP_{=0}$ is
weakly regular of density zero.
\end{proof}

We now use the application of Hal\'{a}sz' theorem and the weak regularity of $\PP_{=0}$ to prove the equidistribution result we are after in terms of natural density. In \cite{AIW} we needed regularity to achieve this goal.

\begin{theorem}\label{thm:main}
Assume the setup above.
Then the sets $\{ n \in \NN | a(tn^2) >0  \}$ and $\{ n \in \NN | a(tn^2) < 0  \}$ have equal positive natural density, that is, both are precisely half of the natural density of the set $\{ n \in \NN | a(tn^2) \neq 0  \}$.
\end{theorem}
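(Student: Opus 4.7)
The plan is to apply Theorem~\ref{Halasz} to a multiplicative function encoding the sign of $(a(tn^2))_{n\ge 1}$. By Shimura's theory and the normalisation $a(t)=1$, the map $n\mapsto a(tn^2)$ is multiplicative on integers $n$ coprime to~$N$. I would therefore define $g:\NN\to\{-1,0,1\}$ as the unique multiplicative function determined by $g(p^r):=\mathrm{sgn}(a(tp^{2r}))$ for primes $p\nmid N$ and $g(p^r):=0$ for primes $p\mid N$; in particular, $g(n)=\mathrm{sgn}(a(tn^2))$ whenever $\gcd(n,N)=1$, and $g(n)=0$ otherwise.

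Next, I verify the two hypotheses of Theorem~\ref{Halasz} for this~$g$. The set $\{p:g(p)=0\}$ equals $\PP_{=0}$ together with the finitely many primes dividing~$N$. Proposition~\ref{prop:weak-regularity} asserts that $\PP_{=0}$ is weakly regular of density zero; the very definition of weak regularity forces the convergence of $\sum_{p\in\PP_{=0}}1/p$, and adjoining finitely many terms preserves convergence. The set $\{p:g(p)=-1\}$ is $\PP_{<0}$ minus at most finitely many primes dividing~$N$, and by Theorem~\ref{thm:prime}, $\PP_{<0}$ has positive natural density, so $\sum_{p\in\PP_{<0}}1/p$ diverges by partial summation. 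Both hypotheses are thus in place, and Theorem~\ref{Halasz} yields
\begin{equation*}
\lim_{x\to\infty}\frac{|\{n\le x:g(n)>0\}|}{|\{n\le x:g(n)\neq 0\}|}=\frac{1}{2},
\end{equation*}
together with the analogous statement for $g(n)<0$.

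By construction, $\{n:g(n)\gtrless 0\}=\{n:\gcd(n,N)=1\text{ and }a(tn^2)\gtrless 0\}$ and $\{n:g(n)\neq 0\}=\{n:\gcd(n,N)=1\text{ and }a(tn^2)\neq 0\}$, so the above already establishes the theorem restricted to integers coprime to~$N$. To transfer the statement to arbitrary $n\in\NN$, I would use the partition $\NN=\bigsqcup_{d}d\cdot\{m\in\NN:\gcd(m,N)=1\}$ indexed by the $N$-smooth positive integers $d$; for each fixed $d$ with $a(td^2)\neq 0$, the Hecke multiplicativity of $a(t\cdot)$ on the index coprime to $N$ gives $\mathrm{sgn}(a(td^2m^2))=\mathrm{sgn}(a(td^2))\cdot\mathrm{sgn}(a(tm^2))$ for $\gcd(m,N)=1$, so the same Hal\'asz application on each class produces the $\tfrac{1}{2}$-equidistribution there. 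Summing the contributions, weighted by the natural densities $\varphi(N)/(Nd)$ of the classes, which sum to~$1$, yields the claim on all of~$\NN$.

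The main obstacle I anticipate is precisely this last transfer step: one must justify the exchange of the limit with the sum over $d$, and separately treat those $d$ for which $a(td^2)=0$ (a set which one expects to be harmless but requires an argument). The analytic heart of the proof, by contrast, is the clean application of Theorem~\ref{Halasz} --- made \emph{unconditional} by the weak regularity of $\PP_{=0}$ supplied by Proposition~\ref{prop:weak-regularity} --- which replaces the stronger regularity assumption needed in~\cite{AIW}.
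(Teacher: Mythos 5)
Your core strategy is exactly the paper's: encode the sign of $a(tn^2)$ in a multiplicative function, verify the two hypotheses of Theorem~\ref{Halasz} using Proposition~\ref{prop:weak-regularity} (for convergence over $\PP_{=0}$) and Theorem~\ref{thm:prime} (for divergence over $\PP_{<0}$), and conclude. Those two verifications are correct and are precisely what the paper does. Where you diverge is in the definition of $g$: you only claim multiplicativity of $n\mapsto a(tn^2)$ on integers coprime to~$N$, set $g(n)=0$ off that set, and are then forced into a transfer argument over the $N$-smooth moduli $d$ --- which you yourself flag as the main obstacle and do not carry out. As written, that leaves the proof incomplete: the interchange of the limit with the infinite sum over $d$ needs a uniformity or dominated-convergence argument, and the classes with $a(td^2)=0$ need to be handled, neither of which you supply.

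The point is that this entire detour is unnecessary. The Shimura relation $a(tn^2m^2)\,a(t)=a(tn^2)\,a(tm^2)$ for $\gcd(n,m)=1$ (p.~453 of~\cite{Shi}) holds for \emph{all} coprime pairs $n,m$, with no coprimality to~$N$ required. Since the normalisation $a(t)=1$ is in force, $g(n)=\mathrm{sgn}(a(tn^2))$ is multiplicative on all of $\NN$, and Theorem~\ref{Halasz} applies directly to the full sets $\{n : a(tn^2)\gtrless 0\}$ and $\{n: a(tn^2)\neq 0\}$, which is how the paper concludes in one step. If you insist on your restricted $g$, the transfer step can be closed (the tail of the sum over $d$ contributes vanishing density, and classes with $a(td^2)=0$ contribute nothing to the counts in question), but you would need to write that out; the cleaner fix is simply to use the unrestricted multiplicativity.
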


\begin{proof}
Let $g(n) = \begin{cases}
1 & \textnormal{ if } a(tn^2)>0,\\
0 & \textnormal{ if } a(tn^2)=0,\\
-1 & \textnormal{ if } a(tn^2)<0.\end{cases}$
Due to the relations $a(tn^2m^2)a(t)=a(tn^2)a(tm^2)$ for $gcd(n,m)=1$ (see p.~453 of~\cite{Shi}),
it is clear that $g(n)$ is multiplicative.
Since $\PP_{=0}$ is weakly regular of density zero by Proposition~\ref{prop:weak-regularity},
it follows that $\sum_{p\in \PP_{=0}} \frac{1}{p}$ is finite.
Moreover, the fact that $\PP_{<0}$ is of positive density implies that
$\sum_{p\in \PP_{<0}} \frac{1}{p}$ diverges.
Thus the result follows from Theorem~\ref{Halasz}.
\end{proof}

In~\cite{AIW} we obtained the same conclusion under the additional assumption of the Generalised Riemann Hypothesis (GRH) because we needed to achieve the regularity of $\PP_{=0}$, which we could derive from the very strong error term in Sato-Tate proved in~\cite{RT}, under the assumption of GRH.

\subsection*{Acknowledgements}
I.I.\ would like to thank Kaisa Matom{\"a}ki for having suggested this alternative approach to the problem. Both authors thank her for having drawn their attention again to Serre's article~\cite{Serre}.
G.W.\ acknowledges partial support by the Fonds National de la Recherche Luxembourg (INTER/DFG/12/10). The authors thank the referee for a careful reading of the article.


\begin{thebibliography}{xx}
\bibitem{AIW} Arias-de-Reyna, S., Inam, I., Wiese, G.: On the Conjectures of Sato-Tate and Bruinier-Kohnen, \textit{the Ramanujan Journal}, \textbf{36}, (2015), 455-481,
\bibitem{ST} Barnet-Lamb, Geraghty, D., Harris, M., Taylor, R.: A Family of Calabi-Yau Varities and Potential Automorphy II, \textit{Pub. Res. Inst. Math. Sci.}, \textbf{47}, (2011), 29-98,
\bibitem{Hal} Hal\'{a}sz, G.: \"{U}ber die Mittelwerte multiplikativer zahlentheoretischer Funktionen, \textit{Acta Math. Acad. Sci. Hung.}, \textbf{19}, (1968), 365-403,
\bibitem{IlGa} Inam, I., Wiese, G.: Equidistribution of Signs for Modular Eigenforms of Half Integral Weight. \textit{Archiv der Mathematik}, \textbf{101}, (2013), 331-339,
\bibitem{MR} Matom{\"a}ki, K., Radziwill, M.: Sign Changes of Hecke Eigenvalues, Preprint, arXiv:1405.7671, (2014),
\bibitem{RT} Rouse, J., Thorner, J.: The explicit Sato-Tate conjecture and densities pertaining to Lehmer type questions. Preprint, arXiv:1305.5283, (2013),
\bibitem{Serre} Serre, J.-P.: Quelques applications du th\'eor\`eme de densit\'e de
Chebotarev. \textit{Publications Math\'ematiques de l'IH\'ES}, \textbf{54}, (1981), 123-201.
\bibitem{Shi} Shimura, G.: On Modular Forms of Half-Integral Weight, \textit{Annals of Math.}, \textbf{97}, (1973), 440-481,


\end{thebibliography}
\end{document}